\theoremstyle{plain}
\newtheorem{theorem}{Theorem}
\newtheorem{lemma}[theorem]{Lemma}
\newcommand{\arxiv}[1]{\href{http://arxiv.org/abs/#1}{\texttt{arXiv:#1}}}
\DeclarePairedDelimiter\abs{\lvert}{\rvert}
\title{Convergence of a sinusoidal infinite series\\
       from Borwein, Bailey, and Girgensohn}
\author{Ravi B. Boppana\thanks{Department of Mathematics, Massachusetts Institute of Technology, Cambridge, Massachusetts, USA\@.  Email address: {\tt rboppana@mit.edu}.}
}
\date{July 20, 2020}
\begin{document}

\maketitle

\begin{abstract}
Borwein, Bailey, and Girgensohn (2004) asked whether the following infinite series converges: 
the sum of $(\frac{2}{3} + \frac{1}{3} \sin n)^n / n$ over all positive integers $n$.
We prove that their series converges.
The proof uses the irrationality measure of $\pi$.  
\end{abstract}  

\section{Introduction}

In their book on experimentation in mathematics, 
Borwein, Bailey, and Girgensohn~\cite[page~56]{BBG} asked whether the infinite series
\[
  \sum_{n=1}^\infty \frac{(\frac{2}{3} + \frac{1}{3} \sin n)^n}{n}
\]
converges.  
They wrote that this is an open problem.  
Here we prove that their series converges.\footnote{We originally solved this problem in 2005 on the Art of Problem Solving forum~\cite{AoPS}.}  

Because each term of the series is positive, 
convergence of the series is equivalent to the partial sums being bounded above by a constant.
The $n$th term of the series is bounded above by~$\frac{1}{n}$,
but that bound by itself does not settle convergence,
because the harmonic series $\sum_n \frac{1}{n}$ diverges.  
Instead, we will check that most terms are quite small, 
and will carefully handle the exceptional terms. 
To cope with the exceptional terms, 
we use Mahler's theorem~\cite{Mahler} that $\pi$ isn't too close to a rational number.   

\section{Proof of convergence}

Let $n$ be a positive integer.  
Say that $n$ is \emph{tame} if for every integer~$a$,
\[
  \abs[\Big]{ n - \frac{\pi}{2} - 2 \pi a } \geq \frac{4}{n^{1/4}} \, .
\]
Roughly speaking, $n$ being tame means that $\sin n$ is not too close to~$1$.
Say that $n$ is \emph{wild} if it is not tame.

For our infinite series, we show that the terms with tame indexes are tiny, 
and so their sum is bounded.  
On the other hand, we show that wild indexes are rare, 
and so the sum of their terms is also bounded.  

Our first lemma shows that, as promised, the terms with tame indexes are tiny.  

\begin{lemma} \label{lemma:tame}
If $n$ is a tame positive integer, then 
\[
  \Bigl( \frac{2}{3} + \frac{1}{3} \sin n \Bigr)^n \le e^{- \sqrt{n}} . 
\]
\end{lemma}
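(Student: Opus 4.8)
The plan is to show that tameness forces the base $\frac23+\frac13\sin n$ to fall below $1$ by an amount of order $n^{-1/2}$, and then to convert the $n$th power into a decaying exponential via the inequality $1+x\le e^x$.

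First I would record what tameness says about $\sin n$. Let $\theta$ denote the distance from $n$ to the nearest real number of the form $\frac\pi2+2\pi a$ with $a$ an integer; since these points are spaced $2\pi$ apart, $0\le\theta\le\pi$, and tameness says exactly that $\theta\ge 4/n^{1/4}$. Because $\sin\bigl(\frac\pi2\pm\theta\bigr)=\cos\theta$, we have $\sin n=\cos\theta$, so by the half-angle identity
\[
  \frac23+\frac13\sin n \;=\; 1-\frac13\bigl(1-\cos\theta\bigr) \;=\; 1-\frac23\sin^2\frac\theta2 .
\]

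Second, I would lower-bound $\sin^2(\theta/2)$. Since $0\le\theta/2\le\pi/2$, the elementary bound $\sin x\ge\frac2\pi x$ on $[0,\pi/2]$ gives $\sin(\theta/2)\ge\theta/\pi$, hence $\sin^2(\theta/2)\ge\theta^2/\pi^2\ge 16/(\pi^2\sqrt n)$ by tameness. Substituting,
\[
  \frac23+\frac13\sin n \;\le\; 1-\frac{32}{3\pi^2\sqrt n}.
\]
The constant $4$ in the definition of tame was chosen precisely so that $\frac{32}{3\pi^2}>1$; a smaller constant would leave the exponent too weak.

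Finally, the base $\frac23+\frac13\sin n$ lies in $[\frac13,1]$ and so is positive, whence the right-hand side above is positive too; raising the displayed inequality to the $n$th power and using $1+x\le e^x$ yields
\[
  \Bigl(\frac23+\frac13\sin n\Bigr)^n \le \Bigl(1-\frac{32}{3\pi^2\sqrt n}\Bigr)^n \le \exp\!\Bigl(-\frac{32\sqrt n}{3\pi^2}\Bigr)\le e^{-\sqrt n},
\]
the last inequality because $\frac{32}{3\pi^2}\ge1$. I do not anticipate a real obstacle; the one thing to check carefully is the numerical claim $\frac{32}{3\pi^2}>1$ (equivalently $\pi^2<\frac{32}{3}$), which is exactly what makes the clean exponent $-\sqrt n$ come out, together with the bookkeeping that $\sin x\ge\frac2\pi x$ is invoked only on $[0,\pi/2]$, valid since $\theta\le\pi$.
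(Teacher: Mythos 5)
Your proposal is correct and follows essentially the same route as the paper: reduce $\sin n$ to $\cos\theta$ for the offset $\theta$ from the nearest $\frac{\pi}{2}+2\pi a$, apply the half-angle identity and the bound $\sin x\ge\frac{2}{\pi}x$ on $[0,\pi/2]$, invoke tameness, and finish with $1+x\le e^{x}$. The only cosmetic difference is that the paper rounds $\frac{32}{\pi^{2}}$ down to $3$ early to get the clean intermediate bound $1-\frac{1}{\sqrt{n}}$, whereas you carry the exact constant $\frac{32}{3\pi^{2}}>1$ to the end.
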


\begin{proof} 
Let $a$ be an integer such that $\abs{ n - \frac{\pi}{2} - 2 \pi a } \le \pi$.
Let $\theta$ be $n - \frac{\pi}{2} - 2 \pi a$.
By the double-angle identity for cosine, 
\[
  1 - \sin n 
	  = 1 - \cos \theta 
		= 2 \sin^2 \frac{\theta}{2} \, .
\]
Because $\abs{\sin x} \ge \frac{2}{\pi} \abs{x}$ for $\abs{x} \le \frac{\pi}{2}$, we have
\[
  1 - \sin n 
	  = 2 \sin^2 \frac{\theta}{2} 
	  \ge 2 \Bigl( \frac{2}{\pi} \cdot \frac{\theta}{2} \Bigr)^2 
		= \frac{2}{\pi^2} \theta^2 .
\]
Because $n$ is tame, we have $\abs{\theta} \ge 4 / n^{1/4}$, and so
\[
  1 - \sin n 
		\ge \frac{2}{\pi^2} \theta^2 
		\ge \frac{2}{\pi^2} \Bigl( \frac{4}{n^{1/4}} \Bigr)^2
		= \frac{32}{\pi^2 \sqrt{n}}
		\ge \frac{3}{\sqrt{n}} \, .
\]
Hence 
\[
  \frac{2}{3} + \frac{1}{3} \sin n
	  \le \frac{2}{3} + \frac{1}{3} \Bigl( 1 - \frac{3}{\sqrt{n}} \Bigr)
		= 1 - \frac{1}{\sqrt{n}} \, .
\]
Taking the $n$th power gives the desired bound:
\[
  \Bigl( \frac{2}{3} + \frac{1}{3} \sin n \Bigr)^n
	  \le \Bigl( 1 - \frac{1}{\sqrt{n}} \Bigr)^n
		\le e^{- n / \sqrt{n}}
		= e^{- \sqrt{n}} .  \qedhere
\]
\end{proof}

To deal with the wild numbers, 
we use a known result that $\pi$ is not too close to a rational number.
Mahler~\cite{Mahler} first proved the theorem below, 
except with the weaker exponent of~$42$ in place of the~$20$.  
The version below is due to Chudnovsky~\cite{Chudnovsky}.
The latest result on the ``irrationality measure'' of~$\pi$ is by Zeilberger and Zudilin~\cite{ZZ},
who improved the exponent to~$7.2$,
provided that $\abs{q}$ is sufficiently large.

\begin{theorem} \label{thm:Mahler}
If $p$ and $q$ are integers such that $\abs{q} > 1$, then
\[
  \abs[\Big]{ \pi - \frac{p}{q} } > \frac{1}{\abs{q}^{20}} \, .
\]
\end{theorem}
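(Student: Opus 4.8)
The plan is to prove Theorem~\ref{thm:Mahler} by exhibiting an explicit sequence of very good rational approximations to $\pi$ and then invoking the standard ``no exceptionally small linear forms'' principle. Concretely, I would produce integers $p_n, q_n$ with $\abs{q_n} = e^{(A + o(1)) n}$ and $0 < \abs{q_n \pi + p_n} = e^{-(B + o(1)) n}$ for constants $A, B > 0$ with $A/B < 19$. Given such a family, a routine argument finishes the job: for a candidate rational $p/q$ with $\abs{q}$ large, pick $n$ so that $\abs{q_n}$ is comparable to $\abs{q}^{A/B}$, compare $\abs{\pi - p/q}$ to $\abs{\pi - p_n/q_n}$ by the triangle inequality together with $\abs{p/q - p_n/q_n} \ge 1/(\abs{q} \abs{q_n})$ when the two fractions differ, and deduce $\abs{\pi - p/q} \gg \abs{q}^{-(1 + A/B)}$; the finitely many small $\abs{q}$ are then checked by hand, and shrinking constants turns $>$ into the clean bound $1/\abs{q}^{20}$.

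To build the approximations I would use hypergeometric / Beukers-style integrals. A convenient family is
\[
  I_n = \int_0^1 \frac{x^{an}(1-x)^{bn}}{(1 + x^2)^{cn + 1}} \, dx ,
\]
or, more flexibly, contour integrals attached to Padé approximants of $\arctan$ evaluated at a well-chosen point, exploiting that $\pi$ is a rational combination of such values (for instance $\pi = 4\arctan 1$, or a Machin-type identity, which improves the constants). Expanding the integrand and integrating term by term writes $I_n$ as $q_n' \pi + p_n'$ with $p_n', q_n' \in \mathbb{Q}$; clearing denominators multiplies by a divisor of $\operatorname{lcm}(1, 2, \dots, Dn)$ for an explicit $D$, which by Chebyshev's estimate (or the prime number theorem) is $e^{(D + o(1))n}$, supplying the arithmetic rate $A$. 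The analytic rate $B$ comes from Laplace/saddle-point asymptotics of $I_n$, where the exponential decay rate is governed by the minimum of the logarithm of the integrand over the relevant interval, optimized over the parameters. One must also verify the non-vanishing $I_n \ne 0$ (so that the linear form is genuinely nonzero) and that $q_n' \ne 0$ (so it is genuinely a form in $\pi$), both of which follow from positivity of the integrand and an easy degree/parity check.

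The main obstacle is the joint optimization: the exponents $a, b, c$ (and the choice of base point or Machin combination) must be tuned so that $A/B$ drops below $19$, and the denominator bookkeeping must be done sharply rather than crudely — tracking exactly which primes divide $q_n'$ and to what power, since the naive $\operatorname{lcm}$ bound is lossy and the difference between a mediocre exponent and Chudnovsky's $20$ lies precisely there. I would first fix the combinatorial shape of the integral, then carry out the asymptotic analysis to express $A/B$ as an explicit function of the parameters, and finally minimize that function numerically and confirm the minimum is comfortably below~$19$; the arithmetic refinement of $A$ is the delicate step, while the analytic estimate of $B$ is a standard Laplace-method computation.
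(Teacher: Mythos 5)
First, a point of orientation: the paper does not prove Theorem~\ref{thm:Mahler} at all. It is imported as a black box, attributed to Mahler (with exponent $42$) and, in the stated form with exponent $20$, to Chudnovsky; the paper's only contribution is to \emph{use} it. So there is no in-paper proof to compare against. Your outline does correctly identify the strategy behind the actual literature proofs --- Hermite--Pad\'e approximants to $\exp$ (or equivalently to $\arctan$ via Machin-type identities), Laplace asymptotics for the analytic decay rate $B$, prime-power bookkeeping for the arithmetic growth rate $A$, and the standard transference from a sequence of small linear forms $q_n\pi + p_n$ to a lower bound $\abs{\pi - p/q} \gg \abs{q}^{-(1+A/B)}$. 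That is indeed the shape of Chudnovsky's argument (his paper is literally titled ``Hermite--Pad\'e approximations to exponential functions\dots'').

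However, as a proof the proposal has a genuine gap, and you locate it yourself: everything quantitative is deferred. You never exhibit the parameters $a,b,c$ (or the Machin combination), never compute $A$ and $B$, and never verify $A/B < 19$; the ``delicate step'' of sharp denominator arithmetic --- which you correctly flag as the difference between a mediocre exponent and $20$ --- is exactly the content of the theorem, and it is left undone. Two further technical holes: (i) the stated bound must hold for \emph{every} $\abs{q} > 1$, not just sufficiently large $\abs{q}$, so all constants in the asymptotics must be made effective and the remaining finite range must actually be checked (this is nontrivial; the Zeilberger--Zudilin improvement cited in the paper is explicitly only for large $\abs{q}$); and (ii) the transference step needs to handle the case $p/q = p_n/q_n$, which requires either non-vanishing of the linear form at two consecutive indices or coprimality/independence input, not just ``positivity of the integrand.'' Given that the source paper treats this as a citation, the appropriate move is to cite Mahler and Chudnovsky rather than to reprove the theorem; if you do want to reprove it, the missing computations above are the entire battle.
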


We now handle the wild numbers.
Given a positive integer~$k$, 
let $W_k$ be the $k$th smallest wild number.
Just in case,
if there are fewer than $k$ wild numbers,
then let $W_k$ be $+ \infty$.

Our next lemma shows that the wild numbers grow pretty large.  

\begin{lemma} \label{lemma:wild}
If $k$ is a positive integer, 
then the $k$th wild number~$W_k$ is at least~$\frac{1}{2} k^{77/76}$.
\end{lemma}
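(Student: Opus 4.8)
The plan is to bound $\nu(N)$, the number of wild positive integers not exceeding $N$, showing $\nu(N) < (2N)^{76/77}$ for all large $N$, and then to invert this. We may assume $W_k$ is finite (otherwise the lemma is vacuous), and then exactly $k$ wild numbers are at most $W_k$, so $\nu(W_k) = k < (2W_k)^{76/77}$, which rearranges to $W_k > \tfrac12 k^{77/76}$. Nothing is needed for small $k$: the $k$th smallest element of any set of positive integers is at least $k$, and $k \ge \tfrac12 k^{77/76}$ exactly when $k \le 2^{76}$, so only $k > 2^{76}$ is at stake, and there $W_k \ge k > 2^{76}$ is already far inside the range where the counting bound applies; this also lets me discard the wild numbers below a fixed threshold at bounded additive cost.

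First I would restate wildness: $n$ is wild iff $\abs{n - \tfrac{\pi}{2} - 2\pi a} < 4 n^{-1/4}$ for some integer $a$, and doubling with $m = 4a + 1$ this reads $\abs{2n - \pi m} < 8 n^{-1/4}$ for some integer $m \equiv 1 \pmod 4$. When $n > 4096$ the right side is below $1$, so this $m$ is unique and lies in $\bigl(\tfrac{2n-1}{\pi}, \tfrac{2n+1}{\pi}\bigr)$; write it $m(n)$. Listing the wild numbers in $(4096, N]$ as $n_1 < n_2 < \cdots < n_r$, the chain $m(n_{i+1}) > \tfrac{2n_{i+1}-1}{\pi} \ge \tfrac{2n_i+1}{\pi} > m(n_i)$ shows the $m(n_i)$ strictly increase, and each being $\equiv 1 \pmod 4$ forces the gaps $D_i := m(n_{i+1}) - m(n_i)$ to be at least $4$.

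The main input is Theorem~\ref{thm:Mahler}. Subtracting the defining inequalities for $n_i$ and $n_{i+1}$ gives $\abs{2(n_{i+1}-n_i) - \pi D_i} < 16 n_i^{-1/4}$, so $\bigl\lvert \pi - 2(n_{i+1}-n_i)/D_i \bigr\rvert < 16/(D_i n_i^{1/4})$; since $D_i \ge 4 > 1$, Theorem~\ref{thm:Mahler} bounds the left side below by $D_i^{-20}$, hence $n_i^{1/4} < 16 D_i^{19}$ and $D_i > n_i^{1/76}/16^{1/19}$. Summing the $D_i$ and using $m(n_r) - m(n_1) < 2N/\pi$ yields $\sum_{i=1}^{r-1} n_i^{1/76} < 2 \cdot 16^{1/19} N / \pi$; since the $n_i$ are distinct integers exceeding $4096$ we have $n_i \ge i$, so comparison with $\int_0^{r-1} t^{1/76}\,dt$ makes the left side at least $\tfrac{76}{77}(r-1)^{77/76}$. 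Rearranging gives $r - 1 < (cN)^{76/77}$ with $c = 154 \cdot 16^{1/19}/(76\pi)$, and as $c^{76/77} \approx 0.75$ sits well below $2^{76/77} \approx 1.98$, there is room to conclude $\nu(N) \le 4096 + r < (2N)^{76/77}$ for all $N$ past a few thousand, in particular all $N > 2^{76}$.

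I expect the crux to be extracting the exponent $76/77$ without loss. One should avoid counting dyadic block by dyadic block: the per-block estimate is of order $x^{75/76}$, but summing it over scales accrues a geometric-series constant exceeding the $2^{76/77}\approx 1.98$ budget, so that approach only beats the target for astronomically large $N$ and leaves a gap just above $2^{76}$. Running a single global telescoping of the gaps $D_i$ against one interval of length $O(N)$ instead turns $\sum n_i^{1/76} = O(N)$ into $r = O(N^{76/77})$ via the integral comparison with the right constant; the remaining care is purely the bookkeeping of that constant --- keeping the factor $16^{1/19}$ and the few thousand discarded small wild numbers from eating the gap between $c^{76/77}$ and $2^{76/77}$ --- which is comfortable because the bound is only needed for $N > 2^{76}$.
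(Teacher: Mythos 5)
Your proposal is correct; I checked the chain of estimates and the constants do work out ($c = 154\cdot 16^{1/19}/(76\pi) \approx 0.746$, so $c^{76/77}\approx 0.75$ sits comfortably under $2^{76/77}\approx 1.98$, and the additive $4097$ is swamped by $N^{76/77} > 2^{75}$ once $N > 2^{76}$). The heart of your argument is the same as the paper's: subtract the defining inequalities of two consecutive wild numbers, recognize the result as a rational approximation to $\pi$ with an even (for you, $\equiv 0 \bmod 4$) denominator exceeding $1$, and invoke Theorem~\ref{thm:Mahler} to force the gap between consecutive wild numbers near $x$ to be at least a constant times $x^{1/76}$. Where you diverge is in how that gap bound is aggregated into the statement $W_k \ge \frac12 k^{77/76}$. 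The paper runs an induction on $k$: from $W_{k+1} - W_k > W_k^{1/76} \ge k^{1/76}$ and the elementary estimate $(k+1)^{77/76} - k^{77/76} \le 2k^{1/76}$, the inductive step closes immediately, and the base cases $k \le 2^{76}$ are exactly the trivial ones you also identify. You instead telescope all the gaps at once against the total length of the interval (working in the space of the multipliers $m(n)$, which costs only a factor $2/\pi$ relative to telescoping the $n_i$ themselves), convert $\sum_i n_i^{1/76} = O(N)$ into $r = O(N^{76/77})$ by an integral comparison, and invert the counting function. Your route buys a cleaner separation between the number-theoretic input and the asymptotic bookkeeping, and as you note it avoids the constant loss of a dyadic decomposition; the paper's induction buys a shorter write-up with no counting function and no threshold to discard, since the trivial base cases and the inductive step mesh exactly at $k = 2^{76}$. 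Either version proves the lemma.
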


\begin{proof}
The proof is by induction on~$k$.
The base cases $k \le 2^{76}$ are simple:
\[
  W_k 
	  \ge k
		= 2 \cdot \frac{1}{2} k
		\ge k^{1/76} \cdot \frac{1}{2} k
		= \frac{1}{2} k^{77/76} .
\]

Given $k \ge 2^{76}$,
the induction hypothesis is that $W_k \ge \frac{1}{2} k^{77/76}$.
We will prove that $W_{k + 1} \ge \frac{1}{2} (k + 1)^{77/76}$.
For convenience, let $x = W_k$ and $y = W_{k + 1}$.
If $y = +\infty$, then we are done, so we may assume that $x$ and $y$ are actual integers.  
We will show that $y$ is far from~$x$.  

Because $x$ is wild, there is an integer~$a$ such that 
\[
  \abs[\Big]{ x - \frac{\pi}{2} - 2 \pi a } < \frac{4}{x^{1/4}} \, .
\]
Similarly, because $y$ is wild, there is an integer~$b$ such that 
\[
  \abs[\Big]{ y - \frac{\pi}{2} - 2 \pi b } 
	  < \frac{4}{y^{1/4}} \, .
\]
Subtracting and using the triangle inequality gives
\[
  \abs{y - x - 2 (b - a) \pi} 
	  < \frac{4}{x^{1/4}} + \frac{4}{y^{1/4}}
		< \frac{8}{x^{1/4}} \, .
\]
Let $p = y - x$ and $q = 2(b - a)$. 
Then the inequality becomes
\[
  \abs{p - q \pi} < \frac{8}{x^{1/4}} \, .
\]
Because $x = W_k \ge k \ge 2^{76}$,
the right side $8 / x^{1/4}$ is less than~$1$.  
Hence, because $p \ge 1$, we have $q > 0$.
Because $q$ is even, it follows that $q \ge 2$.      
By Theorem~\ref{thm:Mahler} (Mahler's theorem), 
\[
  \frac{1}{q^{19}} < \abs{p - q \pi} < \frac{8}{x^{1/4}}  \, .
\]
Solving for $q$ gives $q > 8^{-1/19} x^{1/76}$.
Hence 
\[
  p > q \pi - 1 > 8^{-1/19} \pi x^{1/76} - 1 > 2 x^{1/76} - 1 \ge x^{1/76} \ge k^{1/76}. 
\]
Thus, by the induction hypothesis, we have
\[
  W_{k+1} = W_k + p > W_k + k^{1/76} \ge \frac{1}{2} k^{77/76} + k^{1/76} .
\]
To finish, we use the inequality
\[
  (k + 1)^{77/76} - k^{77/76}
	  = \int_{k}^{k+1} \frac{77}{76} t^{1/76} \, dt
		\le \frac{77}{76} (k + 1)^{1/76}
		\le 2 k^{1/76} .
\]
Therefore, we have
\[
  W_{k + 1} 
	  > \frac{1}{2} k^{77/76} + k^{1/76}
		\ge \frac{1}{2} (k + 1)^{77/76} .   \qedhere
\]
\end{proof}

With our analysis of tame and wild terms out of the way, 
we can finally conclude that the infinite series converges.  

\begin{theorem}
The infinite series
\[
  \sum_{n=1}^\infty \frac{(\frac{2}{3} + \frac{1}{3} \sin n)^n}{n}
\]
converges.
\end{theorem}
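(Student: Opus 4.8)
The plan is to split the sum into tame and wild indices and bound each part separately, using Lemma~\ref{lemma:tame} for the former and Lemma~\ref{lemma:wild} for the latter. Since every term is positive, it suffices to show the partial sums are bounded above by a constant. Write
\[
  \sum_{n=1}^\infty \frac{(\frac{2}{3} + \frac{1}{3}\sin n)^n}{n}
    = \sum_{n \text{ tame}} \frac{(\frac{2}{3} + \frac{1}{3}\sin n)^n}{n}
      + \sum_{n \text{ wild}} \frac{(\frac{2}{3} + \frac{1}{3}\sin n)^n}{n}
\]
and handle the two pieces in turn.

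For the tame part, I would apply Lemma~\ref{lemma:tame} to each tame $n$, which gives $(\frac{2}{3} + \frac{1}{3}\sin n)^n \le e^{-\sqrt{n}}$, and then drop the harmless factor of $1/n \le 1$ (or keep it). The resulting bound $\sum_{n=1}^\infty e^{-\sqrt{n}}$ is a convergent series: for instance, one can compare it to an integral, since $\int_0^\infty e^{-\sqrt{t}}\,dt = 2$ after the substitution $t = u^2$, and $e^{-\sqrt{t}}$ is decreasing so $\sum_{n=1}^\infty e^{-\sqrt{n}} \le 1 + \int_0^\infty e^{-\sqrt{t}}\,dt$ is finite. So the tame part contributes at most a fixed constant.

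For the wild part, I would use the crude per-term bound $(\frac{2}{3} + \frac{1}{3}\sin n)^n/n \le 1/n$, so the wild contribution is at most $\sum_{k=1}^\infty 1/W_k$, where $W_k$ is the $k$th wild number. By Lemma~\ref{lemma:wild}, $W_k \ge \frac{1}{2}k^{77/76}$, so $1/W_k \le 2 k^{-77/76}$, and $\sum_{k=1}^\infty 2 k^{-77/76}$ converges because the exponent $77/76$ exceeds~$1$ (a $p$-series with $p > 1$). Hence the wild part also contributes at most a constant.

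The only mild subtlety — rather than a true obstacle — is bookkeeping: making sure the partial sum over $n \le N$ is genuinely dominated by the two convergent series regardless of how the tame and wild indices interleave. That is immediate since each term is positive and we are simply partitioning the index set, so the partial sum up to $N$ is at most $\sum_{n=1}^{N} e^{-\sqrt{n}} + \sum_{k : W_k \le N} 2k^{-77/76}$, which is bounded by the two full convergent series. Combining the two bounds shows the partial sums are uniformly bounded, and therefore the series converges.
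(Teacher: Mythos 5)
Your proposal is correct and follows essentially the same route as the paper: partition the indices into tame and wild, bound the tame terms by $e^{-\sqrt{n}}$ via Lemma~\ref{lemma:tame}, and bound the wild terms by $1/W_k \le 2k^{-77/76}$ via Lemma~\ref{lemma:wild}, each giving a convergent comparison series. The only cosmetic difference is that you justify the convergence of $\sum_n e^{-\sqrt{n}}$ by an integral comparison while the paper compares it to $\sum_n 5/n^2$; both are fine.
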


\begin{proof}
We deal with the tame indexes and wild indexes separately. 
By Lemma~\ref{lemma:tame}, the tame sum is 
\[
  \sum_{n \text{ tame}} \frac{(\frac{2}{3} + \frac{1}{3} \sin n)^n}{n}
	  \le \sum_{n \text{ tame}} e^{- \sqrt{n}} 
		\le \sum_{n = 1}^{\infty} e^{- \sqrt{n}} \, ,
\]
which is bounded by $\sum_n \frac{5}{n^2}$ for example.  
By Lemma~\ref{lemma:wild}, the wild sum is 
\[
  \sum_{n \text{ wild}} \frac{(\frac{2}{3} + \frac{1}{3} \sin n)^n}{n}
	  \le \sum_{n \text{ wild}} \frac{1}{n}
		= \sum_{k = 1}^{\infty} \frac{1}{W_k}
		\le \sum_{k = 1}^{\infty} \frac{2}{k^{77/76}} \, ,
\]
which is bounded because the exponent~$77/76$ is bigger than~$1$.  
Because the tame and wild sums are bounded, 
the original series is bounded.  
Hence, because the series has only positive terms, it converges.
\end{proof}

\section{Open problem}

We now know that the infinite series has a finite value.
Our proof as is shows that the sum is less than~$200$.  
According to MathWorld~\cite{MathWorld}, 
the sum of the first $10$~million terms is approximately~$2.163$.
Is the sum of the infinite series a known constant?
We leave that question as an open problem.

\section*{Acknowledgment}

I am grateful to James Merryfield for pointing out that convergence of the series is related to rational approximations of~$\pi$.

\end{document}